\def\firstpage{1}
\begin{document}

\titlefigurecaption{{\large \bf \rm Progress in Fractional Differentiation
and Applications}\\ {\it\small An International Journal}}

\title{On a Fractional Oscillator Equation with Natural Boundary Conditions}

\author{Assia Guezane-Lakoud{$^1$}, Rabah Khaldi{$^1$} and Delfim F. M. Torres{$^{2,3,*}$}} 

\institute{Laboratory of Advanced Materials,
Department of Mathematics, Badji Mokhtar--Annaba University,
P.O. Box 12, 23000 Annaba, Algeria\and
Center for Research and Development 
in Mathematics and Applications (CIDMA),
Department of Mathematics, University of Aveiro,
3810-193 Aveiro, Portugal\and
African Institute for Mathematical Sciences (AIMS-Cameroon),
P. O. Box 608 Limbe, Cameroon}

\titlerunning{On a Fractional Oscillator Equation ...}
\authorrunning{A. Guezane-Lakoud et al. }

\mail{delfim@ua.pt}

\received{19 Dec. 2016}
\revised{24 Jan. 2017}
\accepted{24 Jan. 2017}


\abstracttext{We prove existence of solutions for a
nonlinear fractional oscillator equation 
with both left Riemann--Liouville
and right Caputo fractional derivatives 
subject to natural boundary conditions. 
The proof is based on a transformation of the problem 
into an equivalent lower order fractional boundary value problem 
followed by the use of an upper and lower solutions method. 
To succeed with such approach, we first prove a result 
on the monotonicity of the right Caputo derivative.}

\keywords{Boundary value problems, 
fractional derivatives, upper and lower solutions method, 
existence of solutions, integral equations.\\[2mm]
\textbf{2010 Mathematics Subject Classification}.  Primary 34A08; Secondary 34B15.}

\maketitle


\section{\; Introduction}

Fractional calculus is an interesting field of research due to
its ability to describe memory properties of materials and, 
therefore, providing a better representation of physical models. 
Because of this, the study of nonlinear fractional differential 
equations has attracted a lot of attention and many papers 
and monographs are devoted to the subject \cite{12,13,14}.
Here, we are concerned with the solvability of a nonlinear 
fractional oscillator equation involving both Riemann--Liouville 
and Caputo fractional derivatives with natural boundary conditions:
\begin{equation}
\label{1.1}
\omega^{2}u\left( t\right) 
- {^{C}D_{1^{-}}^{p}} \, D_{0^{+}}^{q}u\left( t\right)
=f\left( t,u\left( t\right) \right),
\quad 0\leq t\leq 1,
\quad \omega \in \mathbb{R},
\quad \omega \neq 0,  
\end{equation}
with the initial condition
\begin{equation}
\label{1.2}
u\left( 0\right) =0 
\end{equation}
and the natural condition (see \cite{2,3})
\begin{equation}
\label{1.3}
D_{0^{+}}^{q}u\left( 1\right) =0, 
\end{equation}
where $0<p,q<1$, $^{C}D_{1^{-}}^{p}$ is the right side Caputo derivative, 
$D_{0^{+}}^{q}$ denotes the left side Riemann--Liouville derivative, $u$ is the unknown
function, and $f\in C\left( \left[ 0,1\right] \times \mathbb{R},\mathbb{R}\right)$.
We denote problem \eqref{1.1}--\eqref{1.3} by $(P_1)$. Note that if $p=q \rightarrow 1$, 
then problem $(P_1)$ is a classical oscillator boundary value problem \cite{1}.

Oscillator equations appear in different fields of science, such as
classical mechanics, electronics, engineering, and fractional calculus,
being a subject of strong current research: see, e.g., 
\cite{MR3499534,MR3556551,MR3514456} and references therein.
Different methods are used to solve such equations, for example, 
by the Laplace transform method or by using numerical methods \cite{MR3320682}. 
Since some phenomena obey an equation of motion with fractional derivatives, 
oscillator equations with fractional derivatives are a particularly interesting 
subject to study \cite{2,3,MR3320682,MR3499534,4,MR2899123}.

Blaszczyk studied numerically the associated linear problem of $(P_1)$
with $f\left( t,u\left( t\right) \right) =Ag\left( t\right)$, see \cite{3}. 
In \cite{2}, Agrawal discussed the relationship between transversality 
and natural boundary conditions in order to solve fractional differential equations.
Moreover, he gave some interesting examples \cite{2}. To the best of our knowledge, 
most works in the literature have studied problem $(P_1)$ only numerically and 
with a term $f$ in the right-hand side of equation \eqref{1.1} that does not depend on $u$. 
Differently, here we study problem $(P_1)$ by the lower and upper solutions method, 
considering a more general situation where the nonlinear term $f$ 
is a function of $u$. This is important since the physical phenomena
described by the differential equations are mainly of nonlinear nature.

The method of upper and lower solutions is an efficient tool in the study of
differential equations \cite{MR3513976}. Indeed, when we apply this method, 
we prove not only existence of solution, but we also get its location between 
the lower and upper solutions. The method was first introduced by Picard in 1893,
later developed by Dragoni, and then becoming a useful tool to prove existence 
of a solution for ordinary as well as fractional differential equations 
\cite{5,6,9,10,11}.

The paper is organized as follows. Section~\ref{sec:2} is devoted to some
definitions on fractional calculus and properties that will be used later. 
We also define the upper and lower solutions for problem $(P_1)$. 
Our results are given in Sections~\ref{sec:3} and \ref{sec:4}.
The main result is Theorem~\ref{thm:mr}, which establishes existence
of solution for problem $(P_1)$. To prove it, we make use of several
auxiliary results. The first of them is given in Section~\ref{sec:3},
where we provide a monotonicity result for the right Caputo derivative.
In Section~\ref{sec:4}, we convert problem $(P_1)$ into an equivalent 
Caputo boundary value problem of order $p$ that, 
under some conditions on the nonlinear term $f$,
is used to prove existence of solutions for problem $(P_1)$ between 
the reversed ordered lower and upper solutions. Moreover, we construct 
explicitly the upper and lower solutions. The new results of the paper
are then illustrated through an example in Section~\ref{sec:5}.


\section{\; Preliminaries}
\label{sec:2}

This section is devoted to recall some essential definitions on fractional
calculus \cite{12,13,14}. We also define some concepts related to upper 
and lower solutions.

\begin{definition} 
\ Let $g$ be a real function defined on $\left[ 0,1\right]$ and 
$\mu>0$. Then the left and right Riemann--Liouville fractional 
integrals of order $\mu$ of $g$ are defined respectively by
\begin{equation*}
I_{0^{+}}^{\mu }g(t)
=\frac{1}{\Gamma \left( \mu \right)}
\int_{0}^{t} \frac{g(s)}{(t-s)^{1-\mu }}ds
\end{equation*}
and
\begin{equation*}
I_{1^{-}}^{\mu }g(t) 
= \frac{1}{\Gamma \left( \mu \right)}
\int_{t}^{1} \frac{g(s)}{(s-t)^{1-\mu }}ds.
\end{equation*}
The left Riemann--Liouville and the right
Caputo fractional derivatives of order $0<\mu <1$ 
of function $g$ are
\begin{equation*}
D_{0^{+}}^{\mu }g(t) 
=\frac{d}{dt}\left( I_{0^{+}}^{1-\mu }g\right) (t)
\end{equation*}
and
\begin{equation*}
^{C}D_{1^{-}}^{\mu }g(t) 
=-I_{1^{-}}^{1-\mu }g^{\prime }(t),
\end{equation*}
respectively.
\end{definition}

With respect to the properties of Riemann--Liouville 
and Caputo fractional derivatives,
we recall here two of them.
Let $0<\mu <1$ and $f\in L_{1}\left[ 0,1\right]$. Then,
\begin{enumerate}
\item\  $I_{0^{+}}^{\mu }D_{0^{+}}^{\mu }f\left( t\right) 
=f\left( t\right)+ct^{\mu -1}$ almost everywhere on $\left[ 0,1\right]$;

\item\ $I_{1^{-}}^{\mu}$$^{C}D_{1^{-}}^{\mu}f\left( t\right) 
=f\left( t\right)-f\left( 1\right)$.
\end{enumerate}

Now, we give the definition of lower and upper solutions 
for problem $(P_1)$. By $AC^{2}\left[ 0,1\right]$ we denote
the following space of functions:
\begin{equation*}
AC^{2}\left[ 0,1\right] :=
\left\{ u\in C^{1}\left[ 0,1\right] \, | \, 
u^{\prime} \text{ is an absolutely continuous function on }
\left[ 0,1\right] \right\}.
\end{equation*}

\begin{definition}
\ Functions $\alpha, \beta \in AC^{2}\left[ 0,1\right]$ are called,
respectively, lower and upper solutions of problem $(P_1)$ if

$\omega^{2}\alpha \left( t\right) 
- ^{C}D_{1^{-}}^{r}D_{0^{+}}^{q}\alpha \left( t\right) 
-f\left( t,\alpha \left( t\right) \right) \leq 0$
for all $t\in \lbrack 0,1]$ and all $r \in \lbrack p,1)$
and, moreover, $\alpha \left( 0\right) \geq 0$ 
and $D_{0^{+}}^{q}\alpha \left( 1\right) \geq 0$;

$\omega^{2}\beta\left( t\right) 
- ^{C}D_{1^{-}}^{r}D_{0^{+}}^{q}\beta \left( t\right) 
-f\left( t,\beta \left( t\right) \right) \geq 0$
for all $t\in \lbrack 0,1]$ and all $r\in \lbrack p,1)$
and, moreover, $\beta \left( 0\right) \leq 0$ 
and $D_{0^{+}}^{q}\beta \left( 1\right) \leq 0$.

\noindent Functions $\alpha$ and $\beta$ are lower and upper solutions in
reverse order if $\alpha \left( t\right) \geq \beta \left( t\right)$, 
$0\leq t\leq 1$.
\end{definition}

\begin{remark}
\ If $\alpha $ and $\beta $ are, respectively, lower and upper 
solutions of problem $(P_1)$, then they are still lower 
and upper solutions for the sequence of problems generated 
by the boundary conditions \eqref{1.2}--\eqref{1.3} 
and the fractional differential equations obtained from \eqref{1.1} 
by replacing $p$ by $r$ for all $r \in \lbrack p,1)$.
\end{remark}


\section{\; Monotonicity for the Right Caputo Derivative}
\label{sec:3}

We begin by proving a useful monotonicity result 
for the right Caputo derivative. Theorem~\ref{thm:mon}
provides the right counterpart of the main result of \cite{7},
which was recently obtained for the left Caputo fractional 
derivative $^{C}D_{0^{+}}^{r}f\left( t\right)$.
It will be needed in the proof of our Lemma~\ref{lemma:uses:mon}.

\begin{theorem}
\label{thm:mon}
\ Assume that $f\in C^{1}[0,1]$ is such that $^{C}D_{1^{-}}^{r}
f\left(t\right) \geq 0$ for all $t\in \lbrack 0,1]$ and all $r \in (p,1)$ 
with some $p\in (0,1)$. Then $f$ is monotone decreasing. Similarly, 
if $^{C}D_{1^{-}}^{r}f\left( t\right) \leq 0$ for all $t$ and $r$
mentioned above, then $f$ is monotone increasing.
\end{theorem}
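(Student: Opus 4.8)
The plan is to reduce the statement about the fractional derivative to a statement about the ordinary derivative $f'$ by letting the order $r$ tend to $1$, in analogy with the left-derivative result of \cite{7}. First I would unfold the definition of the right Caputo derivative: since $f\in C^{1}[0,1]$,
\[
{}^{C}D_{1^{-}}^{r}f(t)=-I_{1^{-}}^{1-r}f'(t)
=-\frac{1}{\Gamma(1-r)}\int_{t}^{1}\frac{f'(s)}{(s-t)^{r}}\,ds,
\]
so the hypothesis ${}^{C}D_{1^{-}}^{r}f(t)\ge 0$ says exactly that the right Riemann--Liouville integral $I_{1^{-}}^{1-r}f'(t)$ is nonpositive for every $r\in(p,1)$ and every $t\in[0,1)$.

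The heart of the argument is the limit
\[
\lim_{r\to 1^{-}} I_{1^{-}}^{1-r}f'(t)=f'(t),\qquad t\in[0,1),
\]
that is, fractional integration of vanishing order acts as the identity on the continuous function $f'$. To establish it I would split $f'(s)=f'(t)+\bigl(f'(s)-f'(t)\bigr)$ inside the integral. The constant part contributes
\[
\frac{f'(t)}{\Gamma(1-r)}\int_{t}^{1}(s-t)^{-r}\,ds
=f'(t)\,\frac{(1-t)^{1-r}}{\Gamma(2-r)}\longrightarrow f'(t)
\]
as $r\to 1^{-}$, using $\Gamma(2-r)=(1-r)\Gamma(1-r)\to\Gamma(1)=1$ and $(1-t)^{1-r}\to 1$, while the remaining part is controlled by the uniform continuity of $f'$ on $[0,1]$: splitting the interval of integration into a short piece near $s=t$ (where the integrand $|f'(s)-f'(t)|$ is small) and the rest (where the prefactor $1/\Gamma(1-r)\to 0$ kills the bounded contribution).

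Once the limit is available, the conclusion is immediate: fixing $t\in[0,1)$ and passing to the limit $r\to 1^{-}$ in $I_{1^{-}}^{1-r}f'(t)\le 0$ yields $f'(t)\le 0$, and continuity of $f'$ extends this to $t=1$; hence $f$ is monotone decreasing on $[0,1]$. The second assertion follows by applying the first to $-f$, or equivalently by reversing every inequality. I expect the delicate point to be the justification of the limit—specifically the uniform estimate of the remainder term $\frac{1}{\Gamma(1-r)}\int_{t}^{1}(s-t)^{-r}\,|f'(s)-f'(t)|\,ds$—rather than any of the algebra. Note also that the endpoint $t=1$ carries no information by itself, since the defining integral is empty there, so monotonicity at the right endpoint must be recovered by continuity as indicated above.
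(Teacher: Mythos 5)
Your proposal is correct and takes essentially the same route as the paper: both arguments reduce the fractional hypothesis to a sign condition on $f'$ by sending the order $r\to 1^{-}$, using the limit ${}^{C}D_{1^{-}}^{r}f(t)\to -f'(t)$, i.e.\ $I_{1^{-}}^{1-r}f'(t)\to f'(t)$. The only difference is one of completeness: the paper merely asserts this limit as the right-sided analogue of Theorem~2.10 of \cite{8} and defers the conclusion to the argument of \cite{7}, whereas you prove the limit (splitting off the constant term and controlling the remainder by uniform continuity of $f'$) and carry out the endpoint and $-f$ reductions explicitly, making the proof self-contained.
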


\begin{proof}
\ The proof is based on the following well-known propriety: 
\begin{equation*}
0\leq \underset{r \rightarrow 1^{-}}{\lim }^{C}D_{0^{+}}^{r}f\left(
t\right) =\underset{r\rightarrow 1^{-}}{\lim }
I_{0^{+}}^{1-r}f^{\prime }(t)=f^{\prime }(t)
\end{equation*}
(see Theorem~2.10 of \cite{8}). For the right Caputo fractional derivative 
$^{C}D_{1^{-}}^{r} f\left( t\right)$, one can prove the following 
analogue property: 
\begin{equation}
\label{eq:del}
0\leq \underset{r\rightarrow 1^{-}}{\lim }^{C}D_{1^{-}}^{r}f\left(
t\right) =\underset{r\rightarrow 1^{-}}{\lim }
-I_{1^{-}}^{1-r}f^{\prime }(t)=-f^{\prime }(t).
\end{equation}
Using \eqref{eq:del}, the proof follows in the same way as in \cite{7}.
\hfill $\square$
\end{proof}

\begin{remark}
\ Property \eqref{eq:del} and Theorem~\ref{thm:mon} can be obtained
straightforwardly from the results of \cite{8,7} by using the 
duality theory of Caputo--Torres between 
left and right fractional operators \cite{MyID:307}.
\end{remark}


\section{\; Existence of Solutions}
\label{sec:4}

First we solve a Riemann--Liouville fractional problem of order $q$:
\begin{equation}
\label{P2}
\tag{$P_2$}
\left\{ 
\begin{array}{l}
D_{0^{+}}^{q}u\left( t\right) =v\left( t\right),
\quad 0\leq t\leq 1, \\ 
u\left( 0\right) =0.
\end{array}
\right.
\end{equation}

\begin{lemma}
\ For $0<q<1$, the solution of problem \eqref{P2} is given by 
\begin{equation}
\label{3.1}
u\left( t\right) =\frac{1}{\Gamma \left( q-1\right) }\int_{0}^{t}\left(
t-s\right) ^{q-1}v\left( s\right) ds.  
\end{equation}
\end{lemma}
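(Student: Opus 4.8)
The plan is to invert the fractional derivative by applying the left Riemann--Liouville integral $I_{0^{+}}^{q}$ to both sides of $D_{0^{+}}^{q}u(t) = v(t)$ and then to fix the arising integration constant using the initial condition. This is the natural strategy, since property~1 recalled in Section~\ref{sec:2} describes exactly how $I_{0^{+}}^{q}$ composes with $D_{0^{+}}^{q}$.

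First I would apply $I_{0^{+}}^{q}$ to both sides, obtaining $I_{0^{+}}^{q} D_{0^{+}}^{q}u(t) = I_{0^{+}}^{q}v(t)$. By property~1 the left-hand side equals $u(t) + c\,t^{q-1}$ for some constant $c$, almost everywhere on $[0,1]$, so that
\begin{equation*}
u(t) = I_{0^{+}}^{q}v(t) - c\,t^{q-1}.
\end{equation*}
Writing out $I_{0^{+}}^{q}v$ from its definition gives the integral appearing in \eqref{3.1}, so at this stage we already have the claimed representation up to the singular term $c\,t^{q-1}$.

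The step requiring real care is to show that $c = 0$. Since $0 < q < 1$ we have $q - 1 < 0$, so $t^{q-1} \to +\infty$ as $t \to 0^{+}$, whereas $I_{0^{+}}^{q}v(t) \to 0$ at the origin whenever $v$ is continuous (or merely bounded). Consequently the only way the initial condition $u(0) = 0$ can hold --- indeed the only way $u$ can remain bounded near $t = 0$ --- is to take $c = 0$. This eliminates the singular contribution and leaves $u(t) = I_{0^{+}}^{q}v(t)$, which is precisely the representation \eqref{3.1}.

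Finally, I would observe that the argument is reversible: substituting $u = I_{0^{+}}^{q}v$ back into the equation and using $D_{0^{+}}^{q} I_{0^{+}}^{q}v = v$ confirms that this function indeed solves $(P_2)$ and satisfies $u(0) = 0$. Hence the representation is both necessary and sufficient, completing the proof.
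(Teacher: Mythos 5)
Your proof is correct and follows essentially the same route as the paper's own (one-line) proof: apply $I_{0^{+}}^{q}$ to the equation, invoke the composition property $I_{0^{+}}^{q}D_{0^{+}}^{q}u(t)=u(t)+ct^{q-1}$, and use the initial condition $u(0)=0$ together with the singularity of $t^{q-1}$ at the origin to force $c=0$ --- your version simply spells out the details the paper leaves implicit. Note only that the constant in \eqref{3.1} should read $\Gamma(q)$ rather than $\Gamma(q-1)$; this is a typo in the paper, consistent with your identification $u=I_{0^{+}}^{q}v$ and with the paper's own subsequent definition of the operator $T$.
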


\begin{proof}
\ Applying the properties of the Riemann--Liouville fractional derivative
and the initial condition $u\left( 0\right) =0$, we get \eqref{3.1}.
\hfill $\square$
\end{proof}

Let $E:=C\left( \left[ 0,1\right] ,\mathbb{R}\right)$ be equipped with the
uniform norm $\left\vert \left\vert u\right\vert \right\vert =\underset{t\in 
\left[ 0,1\right] }{\max }\left\vert u\left( t\right) \right\vert$. Define
the operator $T$ on $E$ by
\begin{equation*}
Tv\left( t\right) =\frac{1}{\Gamma \left( q\right) }\int_{0}^{t}\left(
t-s\right) ^{q-1}v\left( s\right) ds=I_{0^{+}}^{q}v\left( t\right),
\quad t\in \left[ 0,1\right].  
\end{equation*}
Thus, $u\left( t\right) =Tv\left( t\right)$. Since 
$D_{0^{+}}^{q}u\left(1\right) =0$, problem $(P_1)$ 
is equivalent to the following Caputo
boundary value problem: 
\begin{equation}
\label{P3}
\tag{$P_3$}
\left\{ 
\begin{array}{l}
\omega^{2}Tv\left( t\right)
- {^{C}D_{1^{-}}^{p}}v\left( t\right) 
=f\left(t,Tv\left( t\right) \right), \quad 0\leq t\leq 1, \\ 
v\left( 1\right) =0.
\end{array}
\right.
\end{equation}
Let us make the following hypotheses:
\begin{description}
\item[$\left(H_1\right)$] 
\ there exists a nonnegative constant $A$ such that 
$$
\omega^{2}x-f\left( t,x\right) \leq A\left( 1-t\right) ^{1-r}
$$ 
for $0\leq t\leq 1$, $0\leq x\leq \frac{A}{\Gamma \left( q+1\right) }$, 
and for all $r \in \lbrack p,1)$;

\item[$\left(H_2\right)$] 
\ there exists a constant $B\leq 0$ such that 
$A\geq\left\vert B\right\vert$ and 
$$
\omega^{2}x-f\left( t,x\right) \geq B\left(1-t\right)^{1-r}
$$ 
for $0\leq t\leq 1,$ $\frac{B}{\Gamma \left(q+1\right)}
\leq x\leq 0$ and for $r \in \lbrack p,1)$.
\end{description}

\begin{lemma}
\ If hypotheses $(H_1)$ and $(H_2)$ hold, then 
problem $(P_1)$ has a lower and an upper solution.
\end{lemma}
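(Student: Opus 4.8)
The plan is to build the lower and upper solutions \emph{explicitly}, using the change of variables $u=Tv=I_{0^{+}}^{q}v$ that turns $(P_1)$ into $(P_3)$. Since the reformulated unknown must satisfy $v(1)=0$, the simplest admissible guesses are the affine functions $A(1-t)$ and $B(1-t)$, so I would set
\[
\alpha(t):=I_{0^{+}}^{q}\bigl[A(1-t)\bigr]=\frac{A}{\Gamma(q+1)}t^{q}-\frac{A}{\Gamma(q+2)}t^{q+1},\qquad \beta(t):=I_{0^{+}}^{q}\bigl[B(1-t)\bigr]=\frac{B}{\Gamma(q+1)}t^{q}-\frac{B}{\Gamma(q+2)}t^{q+1}.
\]
From the identity $D_{0^{+}}^{q}I_{0^{+}}^{q}=\mathrm{Id}$ one gets at once $D_{0^{+}}^{q}\alpha(t)=A(1-t)$ and $D_{0^{+}}^{q}\beta(t)=B(1-t)$, whence $\alpha(0)=\beta(0)=0$ and $D_{0^{+}}^{q}\alpha(1)=D_{0^{+}}^{q}\beta(1)=0$, so all four boundary requirements hold with the correct (non-strict) signs.

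The computational core is the composite operator ${}^{C}D_{1^{-}}^{r}D_{0^{+}}^{q}\alpha={}^{C}D_{1^{-}}^{r}\bigl[A(1-t)\bigr]$. Using ${}^{C}D_{1^{-}}^{r}g=-I_{1^{-}}^{1-r}g'$ and a Beta-function integral, I would obtain
\[
{}^{C}D_{1^{-}}^{r}\bigl[A(1-t)\bigr]=\frac{A}{\Gamma(2-r)}(1-t)^{1-r},\qquad {}^{C}D_{1^{-}}^{r}\bigl[B(1-t)\bigr]=\frac{B}{\Gamma(2-r)}(1-t)^{1-r}.
\]
The decisive elementary estimate is $\Gamma(2-r)\le 1$ for $r\in[p,1)$, valid because $2-r\in(1,2]$ and $\Gamma\le 1$ on $[1,2]$; hence $1/\Gamma(2-r)\ge 1$. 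Together with $A\ge 0\ge B$, this gives ${}^{C}D_{1^{-}}^{r}[A(1-t)]\ge A(1-t)^{1-r}$ and ${}^{C}D_{1^{-}}^{r}[B(1-t)]\le B(1-t)^{1-r}$.

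It then suffices to invoke $(H_1)$ and $(H_2)$. Writing $\alpha(t)=\frac{A\,t^{q}}{\Gamma(q+1)}\bigl(1-\frac{t}{q+1}\bigr)$ (and the analogous factorization for $\beta$), the positivity of the bracket on $[0,1]$ yields the range bounds $0\le\alpha(t)\le\frac{A}{\Gamma(q+1)}$ and $\frac{B}{\Gamma(q+1)}\le\beta(t)\le 0$. Applying $(H_1)$ at $x=\alpha(t)$ gives $\omega^{2}\alpha(t)-f(t,\alpha(t))\le A(1-t)^{1-r}\le {}^{C}D_{1^{-}}^{r}D_{0^{+}}^{q}\alpha(t)$, which is exactly the lower-solution inequality; $(H_2)$ gives the upper-solution inequality for $\beta$ in the same way. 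The reverse ordering $\alpha(t)\ge\beta(t)$ follows from $A-B=A+|B|\ge 0$ and the nonnegativity of the common factor $\frac{t^{q}}{\Gamma(q+1)}(1-\frac{t}{q+1})$; note that the hypothesis $A\ge|B|$ is precisely what this step consumes.

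The step I expect to be most delicate is not any of the inequalities but the regularity: the candidates carry a $t^{q}$ term with $q\in(0,1)$ whose derivative is unbounded near $t=0$, so $\alpha,\beta$ are absolutely continuous but fail to be of class $C^{1}$ at the origin. One must therefore check in which space the constructed functions actually lie and confirm that every fractional operator appearing in the definition of lower/upper solution remains well defined for them, rather than taking membership in $AC^{2}[0,1]$ for granted; the remaining care point is ensuring the bound $\Gamma(2-r)\le 1$ holds uniformly over the whole range $r\in[p,1)$.
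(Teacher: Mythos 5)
Your proposal is correct and is essentially the paper's own proof: the paper takes exactly the same candidates $\alpha = I_{0^{+}}^{q}[A(1-t)]$ and $\beta = I_{0^{+}}^{q}[B(1-t)]$, computes ${}^{C}D_{1^{-}}^{r}[A(1-t)] = \frac{A}{\Gamma(2-r)}(1-t)^{1-r}$, silently uses $1/\Gamma(2-r)\geq 1$, and then invokes $(H_1)$ and $(H_2)$ together with the range bound $0\leq\alpha(t)\leq\frac{A}{\Gamma(q+1)}$, just as you do. You are in fact slightly more careful than the paper in two places: you justify $\Gamma(2-r)\leq 1$ explicitly, and you correctly flag that $\alpha,\beta$ fail to be $C^{1}$ at $t=0$ (so membership in $AC^{2}[0,1]$ as defined is problematic) --- an issue the paper's own construction shares and leaves unaddressed; your only slip is the claim that the ordering $\alpha\geq\beta$ ``consumes'' $A\geq|B|$, when $A\geq 0\geq B$ already suffices.
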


\begin{proof}
\ Setting $\varphi \left( t\right) =A\left( 1-t\right)$, it follows that
\begin{equation*}
0\leq T\varphi \left( t\right) =I_{0^{+}}^{q}\varphi \left( t\right) 
=\frac{At^{q}\left( q+1-t\right) }{\Gamma \left( q+2\right) }
\leq \frac{A}{\Gamma\left( q+1\right) }.
\end{equation*}
Now we prove that $\alpha \left( t\right) =T\varphi \left( t\right) $ is an
upper solution of problem $(P_1)$. We have for all $r \in \lbrack p,1)$ that
\begin{equation*}
\begin{split}
\omega ^{2}T & \varphi \left(t\right)
- {^{C}D_{1^{-}}^{r}}\varphi \left( t\right)  
-f\left( t,T\varphi \left( t\right) \right) \\
&=\frac{-A}{\Gamma \left( 2-r\right) }\left( 1-t\right)
^{1-r}+\omega ^{2}T\varphi \left( t\right) 
-f\left( t,T\varphi \left(t\right) \right) \\
&\leq -A\left( 1-t\right) ^{1-r}+\omega ^{2}T\varphi \left( t\right)
-f\left( t,T\varphi \left( t\right) \right) \\
&\leq 0.
\end{split}
\end{equation*}
In addition, $\alpha \left( 0\right) =T\varphi \left( 0\right)=0$
and $D_{0^{+}}^{q}\alpha \left( 1\right) =\varphi \left( 1\right) =0$.
Thus, $\alpha \left( t\right) =T\varphi \left( t\right)$ 
is a lower solution of problem $(P_1)$.
Similarly, if we set $\psi \left( t\right) =B\left( 1-t\right)$, then 
$\beta \left( t\right) =T\psi \left( t\right)$ is an upper solution of
problem $(P_1)$.
\hfill $\square$
\end{proof}

\begin{lemma}
\ Under hypotheses $(H_1)$ and $(H_2)$, the upper and lower solutions 
of problem $(P_1)$ satisfy 
$\beta \left( t\right) \leq \alpha \left( t\right)$ 
and $D_{0^{+}}^{q}\beta \left( t\right) 
\leq D_{0^{+}}^{q}\alpha \left( t\right)$ 
for all $0\leq t\leq 1$.  
\end{lemma}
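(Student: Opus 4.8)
The plan is to exploit the fact that both the upper and the lower solution arise by applying one and the same \emph{positive} linear operator $T=I_{0^{+}}^{q}$ to the affine functions $\varphi(t)=A(1-t)$ and $\psi(t)=B(1-t)$, and that hypothesis $(H_2)$ forces $B\leq 0\leq A$. Accordingly, the two asserted inequalities will reduce to a single pointwise comparison between $\varphi$ and $\psi$.

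First I would record two structural facts about $T$. Since its integral kernel $(t-s)^{q-1}/\Gamma(q)$ is nonnegative for $0\leq s\leq t\leq 1$, the operator $T$ is order-preserving: if $g_{1}\leq g_{2}$ on $[0,1]$, then $Tg_{1}\leq Tg_{2}$. Moreover, $T$ is a right inverse of $D_{0^{+}}^{q}$, namely $D_{0^{+}}^{q}Tg=D_{0^{+}}^{q}I_{0^{+}}^{q}g=g$; this follows from the semigroup law $I_{0^{+}}^{1-q}I_{0^{+}}^{q}=I_{0^{+}}^{1}$ together with the definition $D_{0^{+}}^{q}=\frac{d}{dt}I_{0^{+}}^{1-q}$, and should not be confused with property~1 of the Preliminaries, which concerns the opposite composition $I_{0^{+}}^{q}D_{0^{+}}^{q}$ and carries an extra term $ct^{q-1}$.

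Next I would compare $\varphi$ and $\psi$ directly. Because $A\geq\lvert B\rvert\geq 0\geq B$ from $(H_2)$, we have $A-B\geq 0$, and since $1-t\geq 0$ on $[0,1]$ it follows that $\varphi(t)-\psi(t)=(A-B)(1-t)\geq 0$, that is, $\psi\leq\varphi$ pointwise on $[0,1]$.

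The two desired inequalities then drop out. Applying the order-preserving operator $T$ to $\psi\leq\varphi$ gives $\beta(t)=T\psi(t)\leq T\varphi(t)=\alpha(t)$; one may equivalently read this off from the closed forms $\alpha(t)=At^{q}(q+1-t)/\Gamma(q+2)$ and $\beta(t)=Bt^{q}(q+1-t)/\Gamma(q+2)$, noting that the common factor $t^{q}(q+1-t)$ is nonnegative on $[0,1]$. Applying instead $D_{0^{+}}^{q}T=\mathrm{Id}$ yields $D_{0^{+}}^{q}\alpha=\varphi$ and $D_{0^{+}}^{q}\beta=\psi$, so the relation $\psi\leq\varphi$ is precisely $D_{0^{+}}^{q}\beta\leq D_{0^{+}}^{q}\alpha$. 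There is essentially no hard step here; the only point requiring a word of care is the justification of $D_{0^{+}}^{q}I_{0^{+}}^{q}\varphi=\varphi$, but for the polynomial $\varphi$ this is immediate from the semigroup property and poses no real obstacle.
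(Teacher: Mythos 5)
Your proposal is correct and follows essentially the same route as the paper's own proof: both arguments reduce the claim to the sign conditions $A\geq 0\geq B$ coming from $(H_1)$--$(H_2)$, together with the fact that $D_{0^{+}}^{q}\alpha=\varphi$ and $D_{0^{+}}^{q}\beta=\psi$, so that everything follows from the elementary pointwise inequality $A(1-t)\geq B(1-t)$. The only cosmetic difference is that the paper reads $\beta\leq 0\leq\alpha$ directly off the closed forms $\alpha(t)=A(q+1-t)t^{q}/\Gamma(q+2)$ and $\beta(t)=B(q+1-t)t^{q}/\Gamma(q+2)$ rather than invoking the monotonicity of $T$, a step you also note as an alternative; your version is, if anything, more explicitly justified.
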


\begin{proof}
\ Since $\alpha \left( t\right) =T\varphi \left( t\right)$ 
and $\beta \left(t\right) =T\psi \left( t\right)$ are, respectively, 
lower and upper solutions of problem $(P_1)$, then from 
\begin{equation*}
\alpha \left( t\right) 
=\frac{A\left( q+1-t\right) t^{q}}{\Gamma\left(q+2\right) }\geq 0,
\quad
\beta \left( t\right) 
=\frac{B\left( q+1-t\right) t^{q}}{\Gamma \left(q+2\right) }\leq 0,
\end{equation*}
we get that
\begin{equation*}
D_{0^{+}}^{q}\alpha \left( t\right) =\varphi \left( t\right) =A\left(
1-t\right) \geq B\left( 1-t\right) =\psi \left( t\right) =D_{0^{+}}^{q}\beta
\left( t\right).
\end{equation*}
This completes the proof.
\hfill $\square$
\end{proof}

We consider a sequence of modified problems
\begin{equation}
\label{P4}
\tag{$(P_4)_{r}$} 
\left\{ 
\begin{array}{l}
-^{C}D_{1^{-}}^{r}v\left( t\right) 
=Fv\left( t\right), \quad 0\leq t\leq 1,\\ 
v\left( 1\right) =0
\end{array}
\right.
\end{equation}
for $r \in \lbrack p,1)$, where the operator 
$F:E\rightarrow E$ is defined by 
\begin{equation*}
Fv\left( t\right) 
=-\omega ^{2}T \min \left[ \varphi,
\max\left( v,\psi \right)\right]  
+f\left( t,T\min \left[
\varphi, \max \left( v,\psi \right) \right] \right),
\quad 0 \leq t\leq 1.
\end{equation*}

Next lemma gives the relation between the solution of a modified
problem \eqref{P4} and the solution of problem $(P_1)$.

\begin{lemma}
\label{lemma:uses:mon}
\ If $v$ is a solution of problem $\left( (P4)_{p}\right)$, 
then $u=Tv$ is solution of problem $(P_1)$ satisfying 
\begin{equation*}
\beta \left( t\right) \leq u\left( t\right) \leq \alpha \left( t\right)
\quad \text{and} \quad
D_{0^{+}}^{q}\beta \left( t\right) \leq D_{0^{+}}^{q}u\left( t\right) 
\leq D_{0^{+}}^{q}\alpha \left( t\right)
\end{equation*}
for all $0\leq t\leq 1$.
\end{lemma}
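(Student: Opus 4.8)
The plan is to reduce everything to the single two-sided bound $\psi(t)\le v(t)\le\varphi(t)$ on $[0,1]$, after which all the assertions of the lemma follow mechanically. Indeed, once this bound is known, the truncation built into $F$ is inactive, i.e. $\min[\varphi,\max(v,\psi)]=v$, so that $Fv(t)=-\omega^{2}Tv(t)+f(t,Tv(t))$ and the equation $-{}^{C}D_{1^{-}}^{p}v=Fv$ of $((P_4)_p)$ becomes $\omega^{2}Tv(t)-{}^{C}D_{1^{-}}^{p}v(t)=f(t,Tv(t))$. Setting $u=Tv=I_{0^{+}}^{q}v$, we have $u(0)=0$ and $D_{0^{+}}^{q}u=D_{0^{+}}^{q}I_{0^{+}}^{q}v=v$, so that $D_{0^{+}}^{q}u(1)=v(1)=0$; substituting $v=D_{0^{+}}^{q}u$ recovers exactly \eqref{1.1}--\eqref{1.3}, whence $u$ solves $(P_1)$. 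Finally, applying the positivity-preserving operator $T=I_{0^{+}}^{q}$ to $\psi\le v\le\varphi$ gives $\beta=T\psi\le Tv=u\le T\varphi=\alpha$, while $D_{0^{+}}^{q}u=v$ combined with $D_{0^{+}}^{q}\beta=\psi$ and $D_{0^{+}}^{q}\alpha=\varphi$ gives the second chain of inequalities. Thus the whole proof rests on establishing the two-sided bound.

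To prove $v\le\varphi$ I would argue by comparison, exploiting the clamping built into $F$. On the set $\{v>\varphi\}$ one has $\max(v,\psi)=v$ (since $\psi\le\varphi<v$) and then $\min[\varphi,v]=\varphi$, so $Fv=F\varphi$ there; hence the equation of $((P_4)_p)$ reads ${}^{C}D_{1^{-}}^{p}v=\omega^{2}\alpha-f(\cdot,\alpha)$ on $\{v>\varphi\}$, and comparing with the lower-solution inequality $\omega^{2}\alpha-{}^{C}D_{1^{-}}^{p}\varphi-f(\cdot,\alpha)\le 0$ (valid at $r=p$ because $\alpha=T\varphi$, $D_{0^{+}}^{q}\alpha=\varphi$) yields ${}^{C}D_{1^{-}}^{p}(v-\varphi)\le 0$ on $\{v>\varphi\}$. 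Now set $w=\varphi-v$, so that $w(1)=0$ and $\{w<0\}=\{v>\varphi\}$. If $w$ were negative somewhere it would attain a negative minimum at some $t^{*}\in[0,1)$, and there the extremum form of the monotonicity property of the right Caputo derivative (the comparison principle underlying Theorem~\ref{thm:mon}) forces ${}^{C}D_{1^{-}}^{p}w(t^{*})\le \frac{(1-t^{*})^{-p}}{\Gamma(1-p)}\bigl(w(t^{*})-w(1)\bigr)<0$, contradicting the sign ${}^{C}D_{1^{-}}^{p}w(t^{*})\ge 0$ obtained in the previous step. Hence $w\ge 0$, i.e. $v\le\varphi$. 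The bound $v\ge\psi$ is obtained symmetrically, using $(H_2)$, the upper-solution inequality for $\beta=T\psi$, the identity $Fv=F\psi$ on $\{v<\psi\}$, and the same principle applied to $z=v-\psi$ with $z(1)=0$.

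The main obstacle is precisely this comparison step, and it is where Theorem~\ref{thm:mon} is indispensable. Because ${}^{C}D_{1^{-}}^{p}$ is nonlocal (its value at a point depends on all values of the function to the right up to $1$), a purely local maximum-principle argument is not available; the monotonicity result for the right Caputo derivative, together with the normalisation $v(1)=\varphi(1)=\psi(1)=0$ and the observation that the truncation forces $v$ to satisfy the \emph{same} relation as $\varphi$ (resp. $\psi$) exactly on the set where it would violate the bound, is what converts the sign of ${}^{C}D_{1^{-}}^{p}(v-\varphi)$ into the inequality $v\le\varphi$. Two technical points I would verify carefully are the regularity of a solution $v$ of $((P_4)_p)$ — since $v=-I_{1^{-}}^{p}(Fv)$ behaves like $(1-t)^{p}$ near $t=1$ it is smooth on $[0,1)$ but not $C^{1}$ up to the endpoint, which is why the extremum is localised at $t^{*}<1$ — and the validity of $D_{0^{+}}^{q}I_{0^{+}}^{q}v=v$ and of the positivity of $I_{0^{+}}^{q}$ for the functions at hand.
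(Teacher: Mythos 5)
Your overall reduction is exactly the paper's: establish the two-sided bound $\psi\le v\le\varphi$, observe that the truncation inside $F$ then collapses so that $v$ solves problem \eqref{P3} and hence $u=Tv$ solves $(P_1)$, and finally use the positivity of $T=I_{0^{+}}^{q}$ together with $D_{0^{+}}^{q}Tv=v$ to obtain both chains of inequalities. The genuine gap is in how you derive the differential inequality on the ``bad'' set, namely the claim that $Fv=F\varphi$ on $\{v>\varphi\}$. The operator $F$ is \emph{not} a pointwise (Nemytskii) operator: writing $\tilde{v}:=\min\left[\varphi,\max\left(v,\psi\right)\right]$, the value $Fv(t)$ depends on $T\tilde{v}(t)=I_{0^{+}}^{q}\tilde{v}(t)$, i.e.\ on the values of $\tilde{v}$ over the whole interval $[0,t]$. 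At a point $t$ with $v(t)>\varphi(t)$ you do have $\tilde{v}(t)=\varphi(t)$, but $T\tilde{v}(t)=T\varphi(t)=\alpha(t)$ would require $\tilde{v}=\varphi$ almost everywhere on $[0,t]$, i.e.\ $v\ge\varphi$ on all of $[0,t]$ --- which is essentially what you are trying to prove and fails in general (think of $\{v>\varphi\}$ being an interval $(a,b)$ with $a>0$ and $v<\varphi$ to the left of $a$). So the identity ${}^{C}D_{1^{-}}^{p}v=\omega^{2}\alpha-f(\cdot,\alpha)$ on $\{v>\varphi\}$, and with it your conclusion ${}^{C}D_{1^{-}}^{p}(v-\varphi)\le 0$ there, does not follow as stated. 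The correct route (and the one the paper intends) uses what holds \emph{everywhere}: from $\psi\le\tilde{v}\le\varphi$ and monotonicity of $T$ one gets $\beta\le T\tilde{v}\le\alpha$, and then hypothesis $(H_1)$ gives $\omega^{2}T\tilde{v}(t)-f\left(t,T\tilde{v}(t)\right)\le A(1-t)^{1-p}\le A(1-t)^{1-p}/\Gamma(2-p)={}^{C}D_{1^{-}}^{p}\varphi(t)$, which is the inequality you need; note that even this step tacitly requires $T\tilde{v}(t)\ge 0$ for $(H_1)$ to apply, a point the paper itself passes over, so the repair is not purely cosmetic.

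Separately, the device you use to convert the sign information into the bound differs from the paper's and is, in itself, sound: the paper takes an interval $[a,b]$ on which $\epsilon=v-\varphi\ge 0$ with $\epsilon(b)=0$ and invokes Theorem~\ref{thm:mon} to get that $\epsilon$ is increasing there, whereas you use a pointwise Luchko-type extremum inequality at a negative minimum $t^{*}$ of $w=\varphi-v$, namely ${}^{C}D_{1^{-}}^{p}w(t^{*})\le \frac{(1-t^{*})^{-p}}{\Gamma(1-p)}\left(w(t^{*})-w(1)\right)$. That inequality is true (integration by parts in $-I_{1^{-}}^{1-p}w'$ at a point where $w(s)\ge w(t^{*})$ for $s\ge t^{*}$ proves it), but it is not ``the principle underlying Theorem~\ref{thm:mon}'': Theorem~\ref{thm:mon} is a monotonicity statement requiring the sign of ${}^{C}D_{1^{-}}^{r}$ for \emph{all} orders $r$ in an interval, while yours is a single-order, single-point statement, so you would have to prove it yourself --- it appears nowhere in the paper. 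It is arguably the better tool here, since the paper's appeal to Theorem~\ref{thm:mon} needs the sign of ${}^{C}D_{1^{-}}^{r}\epsilon$ for all $r\in(p,1)$, which an equation holding at a single order does not obviously supply. But this advantage cannot rescue the proposal, because the sign input it consumes is produced by the flawed step above.
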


\begin{proof}
\ Firstly, for $r \in \lbrack p,1)$, we prove that if $v_{r}$ is a
solution of problem \eqref{P4}, then $\psi \left( t\right)
\leq v_{r}\left( t\right) \leq \varphi \left( t\right)$. Putting 
$\epsilon \left( t\right) =v_{r}\left( t\right) -\varphi \left( t\right)$,
and using the initial conditions $v_{r}\left( 1\right) =\varphi
\left( 1\right) =0$, it yields $\epsilon \left( 1\right) =0$. Suppose the
contrary, i.e., that there exists $t_{1}\in \left[ 0,1\right[$ such that 
$v_{r}\left( t_{1}\right) >\varphi \left( t_{1}\right)$. From the
continuity of $\epsilon $, we conclude that there exist $b\in \lbrack
t_{1},1)$ and $a\in \lbrack 0,t_{1}]$ such that $\epsilon \left( b\right) =0$
and $\epsilon \left( t\right) \geq 0$, $t\in \lbrack a,b]$. Applying the
right Caputo fractional derivative and taking into account the definition
of lower solution, we get 
\begin{equation*}
\begin{split}
^{C}D_{1^{-}}^{r}\epsilon \left( t\right)
&={^{C}D_{1^{-}}^{r}}v_{r}\left( t\right)
-{^{C}D_{1^{-}}^{r}}\varphi \left( t\right) \\
&=\omega ^{2}T \min\left[\varphi,\max\left( v_{r},\psi\right)\right] 
-f\left( t,T \min\left[\varphi,\max\left(v_{r},\psi\right)\right]\right)\\
&\qquad - {^{C}D_{1^{-}}^{r}} \, D_{0^{+}}^{q}\alpha \left( t\right) \\
&\leq 0
\end{split}
\end{equation*}
for $t\in \lbrack a,b]$. Thanks to Theorem~\ref{thm:mon}, 
we know that $\epsilon$ is increasing on $[a,b]$. Since 
$\epsilon \left( b\right) =0$, we conclude that $v_{r}\left( t\right)
\leq \varphi \left( t\right)$, $t\in \lbrack a,b]$, which leads to a
contradiction. Similarly, we prove that $\psi \left( t\right) \leq
v_{r}\left( t\right)$, $t\in \lbrack 0,1]$. From the above discussion,
if $v$ is a solution of problem $\left( (P4)_{p}\right)$, then 
\begin{equation*}
-^{C}D_{1^{-}}^{p}v\left( t\right) 
=\left( Fv\right) \left( t\right)
=-\omega ^{2}Tv\left( t\right) 
+f\left( t,Tv\left( t\right) \right).
\end{equation*}
Thus, $v$ is a solution of \eqref{P3} and, therefore, $u=Tv$ 
is a solution of $(P_1)$. Finally, 
the monotonicity of operator $T$ implies 
\begin{equation*}
T\psi \left( t\right) \leq Tv\left( t\right) \leq T\varphi \left( t\right),
\quad t\in \lbrack 0,1].
\end{equation*}
This achieves the proof.
\hfill $\square$
\end{proof}

Now we are ready to formulate and prove our main result 
of existence of solution for problem $(P_1)$.

\begin{theorem}
\label{thm:mr}
\ Assume that hypotheses $(H_1)$ and $(H_2)$ hold. Then, problem $(P_1)$ has at
least one solution $u$ such that 
$$
\beta \left( t\right) 
\leq u\left( t\right) \leq \alpha \left( t\right)
$$
and
$$
D_{0^{+}}^{q}\beta \left( t\right) 
\leq D_{0^{+}}^{q}u\left( t\right) 
\leq D_{0^{+}}^{q}\alpha \left( t\right)
$$
for all $0\leq t\leq 1$.
\end{theorem}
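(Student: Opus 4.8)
The plan is to reduce existence for problem $(P_1)$ to the existence of a fixed point of an integral operator attached to the modified problem $((P_4)_p)$, and then to invoke Lemma~\ref{lemma:uses:mon}. First I would rewrite $((P_4)_p)$ as a fixed-point equation. Since $0<p<1$, applying the right Riemann--Liouville integral $I_{1^-}^p$ to both sides of $-{^C D_{1^-}^p}v(t)=Fv(t)$ and using the composition property $I_{1^-}^p\,{^C D_{1^-}^p}v(t)=v(t)-v(1)$ together with $v(1)=0$, problem $((P_4)_p)$ becomes equivalent to $v=Sv$, where
\begin{equation*}
Sv(t)=-I_{1^-}^p(Fv)(t)=-\frac{1}{\Gamma(p)}\int_t^1(s-t)^{p-1}Fv(s)\,ds.
\end{equation*}
Thus it suffices to show that $S:E\rightarrow E$ admits a fixed point.

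The decisive feature is that $F$ is uniformly bounded on all of $E$. Because of the truncation through $\min[\varphi,\max(v,\psi)]$, the argument $T\min[\varphi,\max(v,\psi)]$ always lies between $T\psi$ and $T\varphi$, independently of $v$; since these two functions are continuous on $[0,1]$ and $f\in C([0,1]\times\mathbb{R},\mathbb{R})$, there exists a constant $M>0$ with $|Fv(t)|\leq M$ for every $v\in E$ and every $t\in[0,1]$. This a priori bound is precisely what makes a Schauder-type argument available, and it frees us from any growth or Lipschitz requirement on the original nonlinearity.

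Next I would verify the hypotheses of Schauder's fixed-point theorem for $S$ on the closed ball of radius $M/\Gamma(p+1)$ in $E$. The operator $S$ is well defined and continuous, the latter following from the continuity of $f$ (hence of $F$ in $v$) together with continuity of the fractional integral. Uniform boundedness is immediate, since
\begin{equation*}
|Sv(t)|\leq\frac{M}{\Gamma(p)}\int_t^1(s-t)^{p-1}\,ds
=\frac{M(1-t)^p}{\Gamma(p+1)}\leq\frac{M}{\Gamma(p+1)}.
\end{equation*}
For equicontinuity, given $0\leq t_1<t_2\leq1$ I would estimate $|Sv(t_1)-Sv(t_2)|$ by splitting the difference of the two integrals and bounding each piece using $|Fv|\leq M$; the resulting bound depends only on $|t_2-t_1|$ through the kernel $(s-t)^{p-1}$ and tends to $0$ as $t_2\to t_1$, uniformly in $v$. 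By the Arzel\`a--Ascoli theorem, $S(E)$ is relatively compact, so $S$ is a compact operator and Schauder's theorem yields a fixed point $v$ of $S$, that is, a solution of $((P_4)_p)$. Finally, Lemma~\ref{lemma:uses:mon} guarantees that $u=Tv$ solves $(P_1)$ and satisfies the stated inequalities $\beta\leq u\leq\alpha$ and $D_{0^+}^q\beta\leq D_{0^+}^q u\leq D_{0^+}^q\alpha$ on $[0,1]$.

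The main obstacle I anticipate is the equicontinuity estimate: the kernel $(s-t)^{p-1}$ is singular at $s=t$ for $p\in(0,1)$, so controlling $|Sv(t_1)-Sv(t_2)|$ requires a careful splitting of the integration interval and a uniform treatment of the singular part. Everything else---the reduction to a fixed-point equation, the uniform bound on $F$, and the passage back to $(P_1)$ via Lemma~\ref{lemma:uses:mon}---is comparatively direct once the truncation built into $F$ is exploited.
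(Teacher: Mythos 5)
Your proposal is correct and follows essentially the same path as the paper's proof: recast the modified problem $((P_4)_p)$ as a fixed-point equation for the fractional integral operator built from the truncated nonlinearity $F$, get compactness from the uniform bound $|Fv|\leq M$ plus an equicontinuity estimate on the kernel $(s-t)^{p-1}$, apply Schauder via Arzel\`a--Ascoli, and then pass back to $(P_1)$ through Lemma~\ref{lemma:uses:mon}. The only cosmetic differences are that you run Schauder on a closed ball rather than on the paper's order interval $\Omega$, and that your operator $Sv=-I_{1^-}^{p}Fv$ carries the sign that actually inverts $-{}^{C}D_{1^-}^{p}v=Fv$ with $v(1)=0$, whereas the paper writes $Rv=I_{1^-}^{p}Fv$.
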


\begin{proof}
\ Define the operator $R$ on $E$ by 
$Rv\left( t\right) =I_{1^{-}}^{p}Fv\left( t\right)$,
$0\leq t\leq 1$. Set 
$$
\Omega :=\left\{v\in C\left[ 0,1\right] ,\psi \left( t\right) 
\leq v\left(t\right) \leq \varphi \left( t\right), 0\leq t\leq 1\right\},
$$
where 
\begin{equation*}
M:=\max \left\{ \left\vert \omega ^{2}x-f(t,x)\right\vert ,\beta \left(
t\right) \leq x\leq \alpha \left( t\right) ,0\leq t\leq 1\right\} .
\end{equation*}
Let $v\in \Omega$. Taking into account that 
$\beta \left( t\right) 
\leq T\left( \min \left[ \varphi,\max \left( v,\psi \right) 
\right] \right) \leq \alpha \left( t\right)$, then 
\begin{eqnarray*}
\left\vert Rv\left( t\right) \right\vert &\leq &I_{1^{-}}^{p}\left\vert
-\omega ^{2}T\left( \min \left[ \varphi ,\max \left( v,\psi \right)
\right] \right) +f\left( t,T\min \left[ \varphi ,\left( \max \left(
v,\psi \right) \right) \right] \right) \right\vert \\
&\leq &\frac{M}{\Gamma \left( p+1\right)}.
\end{eqnarray*}
Thus, $R\left( \Omega \right)$ is uniformly bounded and 
$R\left( \Omega\right) \subset \Omega $. For simplicity, denote 
\begin{equation*}
g\left( t\right) =-\omega ^{2}T\left( \min \left[ \varphi,
\max\left( v,\psi \right)\right] \right) +f\left( t,T\min \left[
\varphi,\max \left( v,\psi \right) \right] \right) .
\end{equation*}
For $0\leq t_{1}<t_{2}\leq 1$, we have 
\begin{equation*}
\begin{split}
\left\vert Rv\left( t_{1}\right) -Rv\left( t_{2}\right) \right\vert 
&\leq \left\vert I_{1^{-}}^{p}g\left( t_{1}\right) -I_{1^{-}}^{p}g\left(
t_{2}\right) \right\vert \\
&\leq \frac{1}{\Gamma \left( p\right) }\int_{t_{1}}^{t_{2}}\left(
s-t_{1}\right) ^{p-1}\left\vert g\left( s\right) \right\vert ds\\
&\quad + \frac{1}{\Gamma \left( p\right) }\int_{t_{2}}^{1}\left( \left(
s-t_{1}\right) ^{p-1}-\left( s-t_{2}\right) ^{p-1}\right) \left\vert g\left(
s\right) \right\vert ds \\
&\leq \frac{M}{\Gamma \left( p+1\right) }\left( \left( 1-t_{1}\right)
^{p}-\left( 1-t_{2}\right) ^{p}\right) \rightarrow 0
\text{ as } t_{1}\rightarrow t_{2}.
\end{split}
\end{equation*}
Therefore, $R\left( \Omega \right) $ is equicontinuous. We conclude by
the Arzela--Ascoli theorem that $R$ is completely continuous. Then, 
by Schauder's fixed point theorem, $R$ has a fixed point 
$v\in \Omega$. We conclude that $u=Tv$ is a solution of $(P_1)$ satisfying,
by Lemma~\ref{lemma:uses:mon}, 
$\beta \left(t\right) \leq u\left( t\right) \leq \alpha \left( t\right)$ 
and $D_{0^{+}}^{q}\beta \left( t\right) \leq D_{0^{+}}^{q}u\left( t\right) 
\leq D_{0^{+}}^{q}\alpha \left( t\right)$, $0\leq t\leq 1$. 
The proof is complete.
\hfill $\square$
\end{proof}


\section{\; An Illustrative Example}
\label{sec:5}

We present a simple example to illustrate our results. 
Consider problem $(P_1)$ with $\omega =1$, 
$p=q=\frac{1}{2}$, and
\begin{equation*}
f\left( t,x\right) =x-\frac{1}{100}\left( 1-t\right) ^{\frac{1}{2}},
\quad 0 \leq t\leq 1.
\end{equation*}
If we choose $A=\frac{1}{100}$ and $B=-\frac{1}{100}$, then we get 
$$
\omega^{2}x-f\left( t,x\right) =\frac{1}{100}\left( 1-t\right)^{\frac{1}{2}}
\leq A\left( 1-t\right) ^{1-r}
$$ 
and 
$$
\omega ^{2}x-f\left( t,x\right) 
=\frac{1}{100}\left( 1-t\right) ^{\frac{1}{2}}\geq 0
\geq B\left( 1-t\right) ^{1-q}
$$
for $0\leq t\leq 1$ and for all $r\in \lbrack p,1)$. 
Then all assumptions of Theorem~\ref{thm:mr} hold. Consequently, problem 
\begin{equation*}
\begin{gathered}
u\left( t\right) 
- {^{C}D_{1^{-}}^{1/2}} \, D_{0^{+}}^{1/2}u\left( t\right)
= u\left( t\right)-\frac{1}{100}\left( 1-t\right)^{\frac{1}{2}},
\quad 0\leq t\leq 1,\\
u\left( 0\right) =0, \quad D_{0^{+}}^{1/2}u\left( 1\right) =0,
\end{gathered} 
\end{equation*}
has a solution $u$ such that 
$\beta \left( t\right) \leq u\left( t\right) 
\leq \alpha \left(t\right)$. 
By direct computations we get
\begin{equation*}
\alpha\left(t\right) 
=\frac{A\left( q+1-t\right) t^{q}}{\Gamma \left(q+2\right)}
=\frac{t^{\frac{1}{2}}\left(\frac{3}{2}-t\right)}{100
\Gamma\left( \frac{5}{2}\right)} \geq 0,
\quad \beta \left( t\right) 
=-\frac{t^{\frac{1}{2}}\left( \frac{3}{2}-t\right) }{100
\Gamma\left(\frac{5}{2}\right)} \leq 0,
\end{equation*}
and 
$$
u\left( t\right) =\frac{t^{\frac{1}{2}}}{100}\left( 1-\frac{2t}{3}\right).
$$


\section*{\; Acknowledgments}

Guezane-Lakoud and Khaldi were supported by Algerian funds 
within CNEPRU projects  B01120120002 and B01120140061, 
respectively. Torres was supported by Portuguese funds 
through CIDMA and FCT, project UID/MAT/04106/2013,
and by AIMS-Cameroon.



\emergencystretch=\hsize

\begin{center}
\rule{6 cm}{0.02 cm}
\end{center}


\end{document}